\newtheorem{thm}{Theorem}[section] 
\newtheorem{cor}[thm]{Corollary}
\newtheorem{lem}[thm]{Lemma}
\newtheorem{prop}[thm]{Proposition}
\newtheorem{rem}[thm]{Remark}
\newcommand\operA[2]{{\if!#2!\operatorname{#1}\else{\operatorname{#1}_{#2}^{\phantom{I}}}\fi}} 
\newcommand\set[1]{\{#1\}}
\newcommand\Cref[1]{{Corollary~\ref{#1}}}%
\def\tr{{\operatorname{Tr}}}
\def\CX{\mathcal X}
\newcommand{\Trace}[1][]{\if!#1!\operatorname{Tr}\else{\operatorname{Tr}_{#1}^{\phantom{I}}}\fi} 
\long\def\forget#1\forgotten{{}} %
\def\({\left(}
\def\){\right)}
\newif\iffurther
\newif\ifXY 
\journal{??}
\begin{document}

\begin{frontmatter}

\title{Kummer Elements in Cyclic Algebras of Degree 5}

\author{Adam Chapman\corref{ch}}
\ead{adam1chapman@yahoo.com}
\cortext[ch]{The author is supported by Wallonie-Bruxelles International.}
\address{ICTEAM Institute, Universit\'{e} Catholique de Louvain, B-1348 Louvain-la-Neuve, Belgium.}

\begin{abstract}
We construct a graph of Kummer elements in a given cyclic algebra of prime degree and study its properties.
In case of degree 5, we provide sufficient conditions for two elements to have a chain of Kummer elements connecting them, such that the multiplicative commutator of any two consecutive elements in the chain is a root of unity.
\end{abstract}

\begin{keyword}
Central Simple Algebras, Cyclic Algebras, Kummer Elements, Graphs, Chain Lemma
\MSC[2010] primary 16K20; secondary 16K50
\end{keyword}

\end{frontmatter}

\section{Introduction}

One of the major objects of interest in modern algebra is the Brauer group of a field.
In order to understand the structure of this group, we want to develop methods for determining wether two elements in this group are equal.
The group is known to be a torsion group.
An element of order $d$ is the Brauer equivalence class of some tensor product of cyclic algebras of degree $d$, according to \cite{MS}, assuming the characteristic of the field $F$ is prime to $d$ and that the field contains a primitive $d$th root of unity $\rho$.
Under these assumptions, a cyclic algebra of degree $p$ has the form
$$(\alpha,\beta)_{p,F}=F[x,y : x^p=\alpha, y^p=\beta, y x=\rho x y]$$
for some $\alpha,\beta \in F^\times$.

The question is therefore whether two given tensor products of cyclic algebras of degree $d$ are Brauer equivalent.
By multiplying the shorter tensor product by a matrix algebra, we can make the tensor products be of the same length, in which case being Brauer equivalent is the same as being isomorphic.
One way to approach this problem is to come up with a set of basic steps with which one can obtain from one given tensor product of cyclic algebras all the other tensor products isomorphic to it.
A theorem providing such as set is usually referred to as a ``chain lemma" or ``chain equivalence".
In case of quaternion algebras ($d=2$), there are known chain lemmas for one quaternion algebra, tensor products of two quaternion algebras (\cite{ChapVish2} and \cite{Siv}) and tensor products of any number of quaternion algebras over fields of cohomological 2-dimension 2 (\cite{ChapmanCE}).
A chain lemma for one cyclic algebra of degree 3 was provided in \cite{Rost}.

In the papers \cite{ChapVish2} and \cite{HKT} (where an alternative proof to the result from \cite{Rost} was provided) the key-idea was to show that every two Kummer elements, i.e. elements whose $d$th power are their minimal central powers, are connected by a chain of Kummer elements such that the multiplicative commutator of every two consecutive elements in the chain is a power of $\rho$.
At least when dealing with one cyclic algebra of prime degree (or a tensor product of two quaternion algebras) such a theorem implies that every two isomorphic symbols $(\alpha,\beta)$ and $(\alpha',\beta')$ are connected by a chain of isomorphic symbols such that every two consecutive symbols share a common slot.

In this paper we follow this approach by constructing a graph that encodes the relations between different Kummer elements and studying its properties.
This way we manage to provide some sufficient conditions for a pair of Kummer space to be connected by such a chain as described above in case of $d=5$.

\section{Notation}

We wish to introduce here a couple of expressions that will be used later on in this paper:
The (additive) commutator is $[x,z]_d=z x-\rho^d x z$, and the multi-commutator is defined recursively as to be $$[x_1,\dots,x_{k+1}]_{d_1,d_2,\dots,d_k}=[x_1,\dots,x_{k-1},[x_k,x_{k+1}]_{d_1}]_{d_2,\dots,d_k}.$$
The sum of all the words in which the letters $x_1,\dots,x_k$ appear $d_1,\dots,d_k$ times (respectively) is denoted by $x_1^{d_1} * x_2^{d_2} * \dots * x_n^{d_n}$. For example $x^2 * y=x^2 y+x y x+y x^2$. (This notation was introduced in \cite{Revoy}.)

\section{The graph}

Fix a prime $p$, a field $F$ of characteristic not $p$ containing a $p$th root of unity $\rho$ and a cyclic algebra $A$ of degree $p$ over $F$.
Let $\mathcal{X}$ be the set of all Kummer elements in $A$, i.e. all $x \in A \setminus F$ satisfying $x^p \in F^\times$.
We construct a complete labeled digraph $(\mathcal{X},E)$ where the label of each edge $\xymatrix@C=20px@R=20px{x \ar@{->}[r]  & z}$ is the set $\set{0 \leq i \leq p-1 : z_i \neq 0}$ such that $z_0+z_1+\dots+z_{p-1}$ is the unique decomposition of $z$ with $z_i x=\rho^i x z_i$ for each $0 \leq i \leq p-1$. (See \cite[Corollary 3.4]{ChapVish1} for the existence and uniqueness of this decomposition.)
We denote the label of this edge by $l(x,z)$.
The weight of this edge is $\sharp l(x,z)$ and we denote it by $w(x,z)$.
Occasionally we simply write the elements with an edge connecting them, where the label (or weight) of the edge appears above it, unless we want to indicate the labels in both directions, in which case the label above refers to the edge going from left to right and the label below refers to the edge going from right to left. For example, $\xymatrix@C=20px@R=20px{x \ar@{<->}[rr]^{\set{a_1,\dots,a_k}}_{\set{b_1,\dots,b_m}} &  & z}$ means $l(x,z)=\set{a_1,\dots,a_k}$ and $l(z,x)={\set{b_1,\dots,b_m}}$, and
 $\xymatrix@C=20px@R=20px{x \ar@{<->}[r]^{k}_{m}  & z}$ means $w(x,z)=k$ and $w(z,x)=m$.

It is important to note that $w(x,z)=1$ if and only if $w(z,x)=1$. In that case we often simply write $\xymatrix@C=20px@R=20px{x \ar@{<->}[r] & z}$. That does not hold for greater weights.
For instance, if $p=3$ and $$A=(\alpha,\beta)_{3,F}=F[x,y : x^3=\alpha, y^3=\beta, y x=\rho x y]$$ then for $z=y+x^2 y^2$ we have
$$x=(-\rho \beta \alpha-\rho^2 \alpha^{-1}) (z-x^2 y z-(\rho^2 \alpha^2 \beta)^{-1} (x^2 y)^2 z),$$ which means that
\begin{eqnarray*}
x_0&=&(-\rho \beta \alpha-\rho^2 \alpha^{-1}) z,\\ x_1&=&(-\rho \beta \alpha-\rho^2 \alpha^{-1}) (-x^2 y), \\x_2&=&(-\rho \beta \alpha-\rho^2 \alpha^{-1}) (-(\rho^2 \alpha^2 \beta)^{-1} (x^2 y)^2 z),
\end{eqnarray*}
and therefore $w(x,z)=2 \neq 3=w(z,x)$.

\section{Basic properties}

We wish to list here some basic properties of this graph:

\begin{prop}\label{weight22}
If $\xymatrix@C=20px@R=20px{x \ar@{<->}[r]^{2}_{2}  & z}$ then
\begin{enumerate}
\item $l(z,x)=-l(x,z)$, i.e. $x=x_i+x_j$ and $z=z_{-i}+z_{-j}$ for some $i \neq j$.
\item $x_i x_j=\rho^{j-i} x_j x_i$ and $z_{-i} z_{-j}=\rho^{i-j} z_{-j} z_{-i}$.
\end{enumerate}
\end{prop}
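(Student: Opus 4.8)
The plan is to reduce both assertions to a single quadratic relation satisfied by $x$, obtained by playing the two weight-$2$ decompositions against each other. Since $w(x,z)=2$, write $z=z_a+z_b$ with $a\neq b$, $z_a,z_b\neq 0$ and $z_c\,x=\rho^c x z_c$ for $c\in\{a,b\}$; equivalently $x z_c=\rho^{-c}z_c x$, so $x$ is ``pure'' with respect to each single component. Since $w(z,x)=2$, write $x=x_s+x_t$ with $s\neq t$ and $x_s z=\rho^s z x_s$, $x_t z=\rho^t z x_t$. The whole computation will live inside the commuting pair of operators $\mathrm{L}_z$ and $\mathrm{R}_z$ of left and right multiplication by $z$, which is what lets the two frames interact cleanly.

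First I would record that $x$ is annihilated by $(\mathrm{R}_z-\rho^s\mathrm{L}_z)(\mathrm{R}_z-\rho^t\mathrm{L}_z)$. Indeed $(\mathrm{R}_z-\rho^s\mathrm{L}_z)(x_s)=x_s z-\rho^s z x_s=0$ and likewise $(\mathrm{R}_z-\rho^t\mathrm{L}_z)(x_t)=0$; since $\mathrm{L}_z$ and $\mathrm{R}_z$ commute, routing each factor onto the summand it kills gives $x z^2-(\rho^s+\rho^t)z x z+\rho^{s+t}z^2 x=0$. The heart of the proof is to recompute the same operator, with two \emph{free} exponents, directly from $z=z_a+z_b$ via $xz_a=\rho^{-a}z_a x$ and $xz_b=\rho^{-b}z_b x$. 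A short expansion yields, for arbitrary $\lambda,\nu$,
\[(\mathrm{R}_z-\rho^{\lambda}\mathrm{L}_z)(\mathrm{R}_z-\rho^{\nu}\mathrm{L}_z)(x)=(\rho^{-a}-\rho^{\nu})(\rho^{-a}-\rho^{\lambda})\,z_a^2x+(\rho^{-b}-\rho^{\nu})(\rho^{-b}-\rho^{\lambda})\,z_b^2x+C\,z_az_bx+C'\,z_bz_ax,\]
where $C,C'$ are explicit combinations of the four root-of-unity factors. The decisive observation is that the four products $z_a^2,z_b^2,z_az_b,z_bz_a$ lie in the $x$-eigenspaces indexed by $2a,\,2b,\,a+b,\,a+b$ respectively, and for odd $p$ the residues $2a,2b,a+b$ are pairwise distinct modulo $p$; hence vanishing of the total forces the $z_a^2x$- and $z_b^2x$-coefficients to vanish separately.

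Now I specialize $\{\lambda,\nu\}=\{s,t\}$ and invoke the annihilation from the second paragraph. Vanishing of $(\rho^{-a}-\rho^{s})(\rho^{-a}-\rho^{t})$ and of $(\rho^{-b}-\rho^{s})(\rho^{-b}-\rho^{t})$ forces $\{s,t\}=\{-a,-b\}$ (using $a\neq b$, so $-a\neq -b$), which is exactly part (1): $l(z,x)=-l(x,z)$. Feeding $\{\lambda,\nu\}=\{-a,-b\}$ back into the coefficient of the common eigenspace $A^{(x)}_{a+b}$, the surviving relation collapses to $\rho^{-b}z_az_b=\rho^{-a}z_bz_a$, i.e. $z_az_b=\rho^{b-a}z_bz_a$, which in the statement's indexing is $z_{-i}z_{-j}=\rho^{i-j}z_{-j}z_{-i}$. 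The companion relation $x_sx_t=\rho^{t-s}x_tx_s$ then follows by running the identical argument with the roles of $x$ and $z$ interchanged, which is legitimate because the hypothesis $w(x,z)=w(z,x)=2$ is symmetric.

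The main obstacle is the componentwise separation in the displayed identity: it rests on $z_a^2x$ and $z_b^2x$ being nonzero and on $2a,2b,a+b$ being distinct residues. The nonvanishing I would argue from $z_a,z_b\neq 0$, invertibility of $x$, and the fact that each nonzero homogeneous component generates a rank-one piece; the residue-distinctness is where $p$ must be odd. For $p=2$ the separation fails, but the statement is then immediate: $\{a,b\}$ can only be $\{0,1\}$, so part (1) is automatic and part (2) reduces to the classical quaternionic relation $z_0z_1=-z_1z_0$, which one reads off directly from $z^2\in F^\times$.
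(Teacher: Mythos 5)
Your proof is correct and is essentially the paper's own argument in mirror image: your operator identity $(\mathrm{R}_z-\rho^{s}\mathrm{L}_z)(\mathrm{R}_z-\rho^{t}\mathrm{L}_z)(x)=0$ is exactly the multi-commutator identity $[z,z,x]_{s,t}=0$, whereas the paper uses $[x,x,z]_{m,n}=0$ expanded over the components of $x$ and separated by conjugation by $z$. Both arguments separate a quadratic annihilating identity into eigencomponents (using that $2a$, $2b$, $a+b$ are distinct mod an odd $p$), read off the labels from the square terms and the commutation relation from the cross term, and get the companion relation by the $x\leftrightarrow z$ symmetry of the hypothesis.
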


\begin{proof}
We have $x=x_i+x_j$ and $z=z_m+z_n$ such that $z_m x=\rho^m x z_m$, $z_n x=\rho^n x z_n$, $x_i z=\rho^i z x_i$ and $x_j z=\rho^j z x_j$.

Let us consider the equality $[x,x,z]_{m,n}=0$. This holds because $z=z_m+z_n$.
On the other hand, if we substitute $x=x_i+x_j$ in this expression we get the following set of equations (by conjugation by $z$):
\begin{enumerate}
\item $(z x_i-\rho^m x_i z) x_i-\rho^n x_i (z x_i-\rho^m x_i z)=0$
\item $(z x_j-\rho^m x_j z) x_j-\rho^n x_j (z x_j-\rho^m x_j z)=0$
\item $(z x_j-\rho^m x_j z) x_i-\rho^n x_i (z x_j-\rho^m x_j z)+(z x_i-\rho^m x_i z) x_j-\rho^n x_j (z x_i-\rho^m x_i z)=0$
\end{enumerate}

From the first equation we obtain $(\rho^{-i}-\rho^m)(\rho^{-i}-\rho^n) x_i^2 z=0$ and from the second equation we obtain $(\rho^{-j}-\rho^m)(\rho^{-j}-\rho^n) x_i^2 z=0$. Therefore, without loss of generality $m=-i$ and $n=-j$.

From the third equation we obtain $(\rho^{-j}-\rho^{-i}) \rho^{-i} x_j x_i z-(\rho^{-j}-\rho^{-i}) \rho^{-j} x_i x_j z=0$. Consequently, $x_i x_j=\rho^{j-i} x_j x_i$.
Due to symmetry, we also have $z_{-i} z_{-j}=\rho^{i-j} z_{-j} z_{-i}$.
\end{proof}

\begin{lem} \label{lem3}
Assume $\xymatrix@C=20px@R=20px{x \ar@{->}[rr]^{\set{i,j,k}} & & z}$. Then $p=3$ if and only if $2 i \equiv j+k \pmod{p}$ and $2 j \equiv i+k \pmod{p}$.
\end{lem}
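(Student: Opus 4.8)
The statement is essentially arithmetic: since a label is by definition a set, the indices $i,j,k$ are pairwise distinct elements of $\{0,1,\dots,p-1\}$, and the conclusion relates only these residues to the prime $p$. The plan is therefore to treat the two implications separately, using nothing beyond the distinctness of $i,j,k$ and the primality of $p$; the homogeneous decomposition of $z$ enters only to pin down which residues can occur.

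For the implication that the two congruences force $p=3$, I would subtract the second congruence from the first. This yields $2i-2j\equiv (j+k)-(i+k)=j-i\pmod{p}$, that is, $3(i-j)\equiv 0\pmod{p}$. Because $i$ and $j$ are distinct integers in $\{0,\dots,p-1\}$, we have $0<|i-j|<p$, so $i-j\not\equiv 0\pmod{p}$; as $p$ is prime this gives $p\mid 3$, whence $p=3$.

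For the converse, suppose $p=3$. The hypothesis that the edge $x\to z$ carries label $\{i,j,k\}$ means $z$ has exactly three nonzero homogeneous components, so $\{i,j,k\}$ is a three-element subset of $\{0,1,2\}$, forcing $\{i,j,k\}=\{0,1,2\}$ and in particular $i+j+k\equiv 0\pmod{3}$. Then $j+k\equiv -i\pmod{3}$, and since $3i\equiv 0$ we also have $2i\equiv -i\pmod{3}$; hence $2i\equiv j+k$. The relation $2j\equiv i+k$ follows identically by symmetry.

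I expect no genuine obstacle here; the only points requiring care are invoking primality of $p$ (to pass from $p\mid 3(i-j)$ to $p=3$) and the set-theoretic fact that a weight-$3$ label over $p=3$ must exhaust $\{0,1,2\}$. It may be worth recording the symmetric observation that adding the two displayed congruences gives $i+j\equiv 2k\pmod{p}$, so the two hypotheses are equivalent to the single statement that each of $i,j,k$ is the midpoint of the other two modulo $p$ — a configuration that, among distinct residues, is possible precisely when $p=3$.
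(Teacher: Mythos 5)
Your proof is correct and follows essentially the same route as the paper: the forward implication via $3(i-j)\equiv 0\pmod{p}$ with $i\not\equiv j$, and the converse (which the paper dismisses as trivial) spelled out by noting that a weight-$3$ label over $p=3$ must be $\{0,1,2\}$. No issues.
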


\begin{proof}
If $2 i \equiv j+k \pmod{p}$ and $2 j \equiv i+k \pmod{p}$ then $3 (i-j) \equiv 0 \pmod{p}$, but $i-j \not \equiv 0 \pmod{p}$, and therefore $3 \equiv 0 \pmod{3}$, which means that $p=3$.
The other direction is trivial.
\end{proof}

\begin{thm}\label{nothree}
If $\xymatrix@C=20px@R=20px{x \ar@{<->}[r]^{2}_{3}  & z}$ then $x$ and $z$ are connected by edges of weight 1.
\end{thm}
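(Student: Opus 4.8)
The plan is to connect $x$ and $z$ by a chain of length two, $x \leftrightarrow t \leftrightarrow z$, whose middle vertex is an explicit ``ratio'' of the two $x$-homogeneous components of $z$. Write $z = z_i + z_j$ for the decomposition guaranteed by $w(x,z)=2$, so that $z_i x = \rho^i x z_i$ and $z_j x = \rho^j x z_j$ with $i \neq j$. A component with nonzero index is itself a Kummer element: if $i \neq 0$ then $x$ commutes with $z_i^p$, whence $z_i^p \in F^\times$ and $z_i \notin F$, and in particular $z_i$ is invertible; likewise for $z_j$. Assuming for the moment that $0 \notin \{i,j\}$, I set $t = z_j z_i^{-1}$.

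\textbf{Why $t$ works.}
From $z_i x = \rho^i x z_i$ one gets $z_i^{-1} x = \rho^{-i} x z_i^{-1}$, hence $t x = z_j z_i^{-1} x = \rho^{j-i} x z_j z_i^{-1} = \rho^{j-i} x t$. Since $i \neq j$, the index $j-i$ is invertible modulo $p$, so $t \notin F$, $t^p \in F^\times$ (so $t$ is Kummer), and $w(x,t)=1$, giving the first weight-$1$ edge. The second edge $t \leftrightarrow z$ is where the content lies: if the two components $\rho$-commute, say $z_i z_j = \rho^m z_j z_i$, then a direct computation gives $t z = \rho^{-m} z t$. Indeed, expanding $t z = z_j + z_j z_i^{-1} z_j$ and $z t = \rho^m z_j + z_j^2 z_i^{-1}$ and comparing the $z_j$ and $z_j^2 z_i^{-1}$ parts forces the single relation $k \equiv -m$ for the $z$-component index of $t$. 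Provided $m \not\equiv 0$, this yields $w(z,t)=1$ and completes the chain.

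\textbf{The key reduction.}
Everything therefore reduces to the claim that, under $w(x,z)=2$ and $w(z,x)=3$, the components $z_i$ and $z_j$ must $\rho$-commute (with nonzero index $m$). I would establish this by the commutator method already used for \Pref{weight22}: the identity $[x,x,z]_{i,j}=0$ holds because $z=z_i+z_j$; substituting the weight-$3$ expansion $x = x_a + x_b + x_c$ and separating the result into its homogeneous parts under conjugation by $z$, by grouping the monomials $x_s x_t$ according to the value of $s+t$, produces a system of equations. The diagonal parts give $(1-\rho^{i+s})(1-\rho^{j+s})\, z x_s^2 = 0$ for $s \in \{a,b,c\}$, so each of $a,b,c$ must lie in $\{-i,-j\}$ unless its exponent $2s$ collides with one of the cross exponents; since three indices cannot all lie in the two-element set $\{-i,-j\}$, at least one collision of the form $2s \equiv s'+s''$ is forced, and by \Lref{lem3} such collision relations are precisely those distinguishing $p=3$ from $p\neq 3$. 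Carrying out this bookkeeping pins down the index set $\{a,b,c\}$ together with the cross-term relations among $x_a,x_b,x_c$, and from these the $\rho$-commutation of $z_i$ and $z_j$ follows.

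\textbf{Main obstacle.}
The hard part is exactly this last step: converting the commutator equations into the clean statement that $z_i z_j$ is a scalar multiple of $z_j z_i$. The argument must be organized around the two cases $p=3$ and $p\neq 3$ separated by \Lref{lem3}, and care is needed at the boundary where one of $i,j$ equals $0$, so that $z_0 \in F[x]$ is not Kummer; there $\rho$-commutation of $z_0$ with $z_j$ is equivalent to $z_0$ being a monomial in $x$, which the weight-$3$ hypothesis must be shown to force (this also guarantees the invertibility of $z_0$ needed to define $t$). Finally one must verify the non-degeneracy $m \not\equiv 0$, since otherwise $t$ would commute with $z$ and fail to give an edge; this should likewise drop out of the index analysis.
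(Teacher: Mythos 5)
Your reduction is sound as far as it goes: if the two $x$-homogeneous components of $z$ satisfy $z_i z_j=\rho^m z_j z_i$ with $m\not\equiv 0 \pmod p$, then $t=z_jz_i^{-1}$ is Kummer, $\rho$-commutes with both $x$ and $z$, and gives the desired chain — this is precisely \Rref{notzero} of the paper. The genuine gap is that you never prove the claim this reduction rests on, and the method you sketch cannot deliver it. The identity $[x,x,z]_{i,j}=0$, expanded via $x=x_a+x_b+x_c$ and sorted into homogeneous parts under conjugation by $z$, yields relations among the components $x_a,x_b,x_c$ of $x$ (this is exactly what the paper extracts: after normalizing $\{i,j\}=\{-a,-b\}$, one gets $x_ax_c=\rho^{c-a}x_cx_a$ and $x_bx_c=\rho^{c-b}x_cx_b$); it says nothing directly about how $z_i$ and $z_j$ commute with each other. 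To constrain $z_iz_j$ against $z_jz_i$ one needs the dual identity $[z,z,x]_{\ast,\ast}=0$, as in the proof of \Pref{weight22} — and that identity fails here precisely because $w(z,x)=3$. So the assertion that ``from these the $\rho$-commutation of $z_i$ and $z_j$ follows'' is an unsupported leap. There is also real reason to doubt the claim itself: the Remark in the degree-$5$ section exhibits elements with $w(x,z)=2$ and $w(z_i,z_j)=2$ or $4$, so $\rho$-commutation of the two components is certainly not a consequence of $w(x,z)=2$ alone, and nothing in your bookkeeping ties it to the hypothesis $w(z,x)=3$.

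The paper's proof turns the failure of $[z,z,x]_{a,b}=0$ into the construction of a different middle vertex. One computes $[z,z,x]_{a,b}=(\rho^c-\rho^a)(\rho^c-\rho^b)z^2x_c$ directly, while substituting $z=z_{-a}+z_{-b}$ on the left expresses the same quantity through $z_{-a}z_{-b}\,x$ and $z_{-b}z_{-a}\,x$, both of which $\rho^{-a-b}$-commute with $x$. This exhibits the third component $x_c$ of $x$ — not a ratio of the components of $z$ — as an element that $\rho$-commutes with $x$, and it already $\rho^c$-commutes with $z$ by construction; the degenerate case $c=-a-b=0$ is excluded separately. If you wish to salvage your route through $t=z_jz_i^{-1}$, you would first have to prove $w(z_i,z_j)=1$ under the standing hypotheses, which is a substantially stronger statement than anything the relations among $x_a,x_b,x_c$ provide.
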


\begin{proof}
For some $m,n,i,j,k \in \mathbb{Z}/p\mathbb{Z}$, $x=x_i+x_j+x_k$ and $z=z_m+z_n$.
For $p=2,3$ it is known to be true, so assume $p>3$.
Then at least two of the following hold: $i+k \not \equiv 2 j \pmod{p}$, $j+k \not \equiv 2 i \pmod{p}$, $i+j \not \equiv 2 k \pmod{p}$, because otherwise $p=3$ according to Lemma \ref{lem3}.
Without loss of generality we may assume that $i+k \not \equiv 2 j \pmod{p}$ and $j+k \not \equiv 2 i \pmod{p}$.

Consider the equality $[x,x,z]_{m,n}=0$.
If we take only the part which $\rho^{2 i}$-commutes with $z$ we get $(\rho^{-i}-\rho^m)(\rho^{-i}-\rho^n) x_i^2 z=0$, and if we take only the part which $\rho^{2 j}$-commutes with $z$ we get $(\rho^{-j}-\rho^m)(\rho^{-j}-\rho^n) x_j^2 z=0$.
Consequently, $m=-i$ and $n=-j$ without loss of generality.
If we take only the part which $\rho^{i+k}$-commutes with $z$ we obtain $(\rho^{-k}-\rho^{-i}) \rho^{-i} x_k x_i z-(\rho^{-k}-\rho^{-i}) \rho^{-k} x_i x_k z=0$. Consequently, $x_i x_k=\rho^{k-i} x_k x_i$. Similarly $x_j x_k=\rho^{k-j} x_k x_j$.

Now consider the equality $[z,z,x]_{i,j}=(\rho^k-\rho^i) (\rho^k-\rho i) z^2 x_k$.
If we substitute $z=z_{-i}+z_{-j}$ on the left-hand side of this equation then we get $(\rho^j-\rho^i) \rho^i z_{-j} z_{-i}-(\rho^j-\rho^i) \rho^j z_{-j} z_{-i}=(\rho^k-\rho^i) (\rho^k-\rho i) x_k$.

Consequently, $x_k$ $\rho^{-i-j}$-commutes with $x$. 
If $k=-i-j=0$ then either $x_k \in F$ or $x$ and $z$ commute.
If $x$ and $z$ commute we are done.
If $x_k \in F$ then $\tr(x)=p x_k=0$ which means that $x_k=0$, contradiction.
Assume then that either $k \neq 0$ or $-i-j \neq 0$.
Then $x_k \in \mathcal{X}$ and $\xymatrix@C=20px@R=20px{x \ar@{<->}[r] & x_k \ar@{<->}[r] & z}$.
\end{proof}

\begin{prop} \label{split2cent}
If $\xymatrix@C=20px@R=20px{x \ar@{->}[rr]^{\set{i,j}} &  & z}$, then $z_i,z_j \in \CX$.
\end{prop}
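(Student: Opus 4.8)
The plan is to exploit the $\Z/p\Z$-grading on $A$ induced by conjugation by $x$, together with the hypothesis $z^p\in F^\times$, to pin down the homogeneous components $z_i$ and $z_j$. Write $A=\bigoplus_{k=0}^{p-1}A_k$, where $A_k=\set{a\in A\suchthat ax=\rho^k xa}$; this is the eigenspace decomposition for conjugation by $x$, it satisfies $A_kA_l\sub A_{k+l}$ (indices mod $p$), and $A_0=\Ce{x}=F[x]$ is commutative of dimension $p$ over $F$. By hypothesis $z=z_i+z_j$ with $0\neq z_i\in A_i$ and $0\neq z_j\in A_j$, and $z^p\in F^\times$. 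First I would record the auxiliary \emph{fact} that a nonzero homogeneous element $w\in A_k$ with $k\not\equiv 0$ lies in $\CX$: indeed $w^p\in A_{pk}=A_0=F[x]$, while conjugation by $w$ fixes $w^p$ and sends $x\mapsto\rho^k x$; as $\rho$ is a primitive $p$th root of unity and $k\not\equiv 0$, this automorphism generates $\Gal(F[x]/F)$, whose fixed subring is $F$. Hence $w^p\in F^\times$, and $w\notin F$ because $k\neq 0$, so $w\in\CX$.

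Next I would expand $z^p=(z_i+z_j)^p$ and sort the monomials by the number $a$ of factors equal to $z_i$. A monomial with $a$ factors $z_i$ and $p-a$ factors $z_j$ lies in degree $ai+(p-a)j\equiv a(i-j)\pmod p$; since $i\neq j$ and $p$ is prime, $i-j$ is invertible, so distinct values $a\in\set{0,1,\dots,p}$ give distinct degrees modulo $p$ except that $a=0$ and $a=p$ both give degree $0$. As $z^p\in F^\times\sub A_0$, comparing homogeneous components yields
\[ z_i^p+z_j^p=z^p\in F^\times \qquad\text{and}\qquad z_i^a * z_j^{p-a}=0\ \ (1\le a\le p-1). \]

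Now the two labels cannot both be $0$, so at least one of $z_i,z_j$ already lies in $\CX$ by the fact above; say $j\neq 0$, so $z_j\in\CX$ with $z_j^p\in F^\times$. If also $i\neq 0$ we are done. The remaining point is the case $i=0$: here $z_0\in F[x]$, and the first displayed relation gives $z_0^p=z^p-z_j^p\in F$, with $z_0^p\neq 0$ since $z_0$ is a nonzero element of the field $F[x]$. It only remains to see $z_0\notin F$. Suppose $z_0=s\in F^\times$; then $z_0$ is central, every monomial with $a$ factors $z_0$ equals $s^a z_j^{p-a}$, and so $z_0^a*z_j^{p-a}=\binom{p}{a}s^a z_j^{p-a}$. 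For $1\le a\le p-1$ we have $\binom{p}{a}\neq 0$ in $F$ because $\charac{F}\neq p$, while $s^a z_j^{p-a}\neq 0$; this contradicts the vanishing of $z_0^a*z_j^{p-a}$. Hence $z_0\notin F$ and $z_0\in\CX$.

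I expect the main obstacle to be exactly this degenerate case $0\in\set{i,j}$: when $i,j$ are both nonzero the statement is immediate from homogeneity and the cross-term relations are not even needed, but when one component sits in degree $0$ its Kummer property is not automatic and must be forced out of the vanishing relations $z_i^a*z_j^{p-a}=0$, as above. A secondary point deserving care is the justification of the recorded fact (in particular that $w^p\neq 0$) in the full generality of the statement; this is transparent when $A$ is a division algebra, since then $F[x]$ is a field and every nonzero homogeneous element of nonzero degree is invertible.
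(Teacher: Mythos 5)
Your proof is correct and follows essentially the same route as the paper: decompose $z^p=(z_i+z_j)^p$ along the conjugation grading by $x$, extract $z_i^p+z_j^p=z^p$ together with the vanishing of the cross-terms $z_i^a*z_j^{p-a}$, observe that homogeneous components of nonzero degree are automatically Kummer, and treat the case $i=0$ separately. The only real divergence is how you rule out $z_0\in F$: the paper uses the trace ($\tr(z)=pz_0=0$, since a Kummer element outside $F$ has reduced trace zero), whereas you use the relations $z_0^a*z_j^{p-a}=0$. That works, but one claim in your version is wrong as stated: $\binom{p}{a}$ need not be nonzero in $F$ for every $1\le a\le p-1$ merely because $\charac{F}\neq p$ --- for instance $\binom{5}{2}=10$ vanishes in characteristic $2$. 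You only need a single value of $a$ to reach the contradiction, and $a=1$ (or $a=p-1$) gives coefficient exactly $p$, which is nonzero in $F$; restricting to that case repairs the step in one line (and matches what the paper itself does in Corollary~\ref{nozero}). Your closing caveat about $w^p\neq 0$ is well taken: the paper's proof relies on the same implicit assumption that $F[x]$ is a field (in a split algebra a nonzero homogeneous component can be nilpotent, and the statement can literally fail), so this is a lacuna you share with the source rather than a defect specific to your argument.
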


\begin{proof}
From $F \ni z^p=(z_i+z_j)^p=\sum_{k=0}^p z_i^k * z_j^{p-k}$, by taking the part of the right-hand side of this equality which commutes with $x$ we obtain $z^p=z_i^p+z_j^p$.
If $i,j \neq 0$ then automatically $z_i$ and $z_j$ are Kummer.
If $i=0$ then $j \neq 0$ and so $z_j \in \CX$, therefore $z_j^p \in F$, and consequently $z_i^p=z^p-z_j^p \in F$,
which means that either $z_i \in \CX$ or $z_i \in F$.
The second option is not possible, because then $\tr(z)=p z_i=0$, contradiction.
\end{proof}

\begin{cor}\label{pcent}
If $\xymatrix@C=20px@R=20px{x \ar@{->}[rr]^{\set{i,j}} &  & z}$ then $F z_i+F z_j$ is a Kummer space. Moreover, its exponentiation form $f(u,v)=(u z_i+v z_j)^p$ is diagonal, i.e. $f(u,z)=u^p z_i^p+v^p z_j^p$.
\end{cor}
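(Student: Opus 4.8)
The plan is to exploit the homogeneity of the words $z_i^k * z_j^{p-k}$ under conjugation by $x$, together with the fact that $z^p$ is central.

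First I would record what \Pref{split2cent} already supplies: $z_i,z_j\in\CX$, hence $z_i^p,z_j^p\in F^\times$, and (from its proof) $z^p=z_i^p+z_j^p$. Next I would introduce the inner automorphism $\sigma(a)=x^{-1}ax$ of $A$. Since $x^p\in F^\times$ is central we have $\sigma^p=\mathrm{id}$, and by \cite[Corollary 3.4]{ChapVish1} the algebra splits as a direct sum $A=\bigoplus_{m=0}^{p-1}A_m$ of the eigenspaces $A_m=\set{a: \sigma(a)=\rho^m a}$. The defining relation $z_i x=\rho^i x z_i$ rewrites as $\sigma(z_i)=\rho^i z_i$, so $z_i\in A_i$ and likewise $z_j\in A_j$.

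The key step is to show $z_i^k * z_j^{p-k}=0$ for every $0<k<p$. Because $\sigma$ is an algebra homomorphism, each word occurring in $z_i^k * z_j^{p-k}$ is a product of $k$ factors $z_i$ and $p-k$ factors $z_j$, hence a $\sigma$-eigenvector of eigenvalue $\rho^{ki+(p-k)j}=\rho^{k(i-j)}$, using $pj\equiv 0 \pmod p$. Thus $z_i^k * z_j^{p-k}\in A_{k(i-j)\bmod p}$, and since $i\not\equiv j$ and $p$ is prime, the residues $k(i-j)$ for $0<k<p$ are pairwise distinct and all nonzero modulo $p$. Now expand $z^p=\sum_{k=0}^{p} z_i^k * z_j^{p-k}$; as $z\in\CX$ the left-hand side lies in $F^\times$ and is therefore $\sigma$-invariant, i.e. it sits in $A_0$. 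Matching components across the direct sum $A=\bigoplus_m A_m$ forces every summand lying outside $A_0$ to vanish, that is $z_i^k * z_j^{p-k}=0$ for $0<k<p$ (the surviving extreme terms $k=0,p$ recover $z^p=z_i^p+z_j^p$, consistently with \Pref{split2cent}).

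Finally, for arbitrary $u,v\in F$, centrality of the scalars gives $(u z_i+v z_j)^p=\sum_{k=0}^{p} u^k v^{p-k}\,(z_i^k * z_j^{p-k})$, and by the vanishing just proved only the two extreme terms survive, so $(u z_i+v z_j)^p=u^p z_i^p+v^p z_j^p\in F$. Hence every element of $F z_i+F z_j$ has its $p$-th power in $F$, so it is a Kummer space, and its exponentiation form $f(u,v)=u^p z_i^p+v^p z_j^p$ is diagonal, as claimed. I expect the main obstacle to be exactly the vanishing of the mixed terms $z_i^k * z_j^{p-k}$: everything hinges on recognising them as $\sigma$-homogeneous of pairwise distinct weights and invoking the directness of the eigenspace decomposition to annihilate those of nonzero weight. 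Once this is secured, diagonality and the Kummer-space property are immediate from the centrality of $u$ and $v$.
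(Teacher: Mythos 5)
Your proof is correct and follows the same route as the paper: the paper's own argument is precisely to expand $z^p=(z_i+z_j)^p=\sum_k z_i^k * z_j^{p-k}$ and kill the mixed terms ``by conjugation by $x$,'' which is exactly your eigenspace decomposition for $\sigma(a)=x^{-1}ax$. You have simply written out the details (the eigenvalue $\rho^{k(i-j)}$, its nonvanishing for $0<k<p$, and the centrality of $u,v$) that the paper leaves implicit.
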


\begin{proof}
We have $z^p=(z_i+z_j)^p$. By conjugation by $x$ we obtain the relations $z_i^k * z_j^{p-k}=0$ for all $1 \leq k \leq p-1$.
Together with the result from Proposition \ref{split2cent}, the space $F z_i+F z_j$ is therefore Kummer and it is easy to see why the exponentiation form is diagonal.
\end{proof}

\begin{cor}\label{nozero}
If $\xymatrix@C=20px@R=20px{x \ar@{->}[rr]^{\set{i,j}} &  & z}$ then $0 \not \in l(z_i,z_j)$. In particular $w(z_i,z_j) \leq p-1$.
\end{cor}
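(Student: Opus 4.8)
The goal is to show that the component of $z_j$ commuting with $z_i$ is zero. Writing the decomposition of $z_j$ relative to $z_i$ as $z_j = w_0 + w_1 + \dots + w_{p-1}$ with $w_m z_i = \rho^m z_i w_m$, the statement $0 \notin l(z_i,z_j)$ is precisely the assertion that $w_0 = 0$. My plan is to extract this from the vanishing of a single mixed star product furnished by the previous results.

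By \Cref{split2cent} the elements $z_i,z_j$ are Kummer, so $z_i$ is in particular invertible; and the proof of \Cref{pcent} records the relations $z_i^k * z_j^{p-k} = 0$ for every $1 \le k \le p-1$. I would use only the extreme case $k = p-1$, namely
$$z_i^{p-1} * z_j = \sum_{l=0}^{p-1} z_i^{\,l}\, z_j\, z_i^{\,p-1-l} = 0.$$

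The key computation is to evaluate this sum after substituting $z_j = \sum_m w_m$. From $w_m z_i = \rho^m z_i w_m$ one gets $z_i^{\,l} w_m = \rho^{-ml} w_m z_i^{\,l}$, so each summand collapses to $z_i^{\,l} w_m z_i^{\,p-1-l} = \rho^{-ml} w_m z_i^{\,p-1}$. Summing over $l$ then produces the geometric sum $\sum_{l=0}^{p-1} \rho^{-ml}$, which equals $p$ when $m \equiv 0$ and vanishes otherwise since $\rho$ is a primitive $p$th root of unity. Hence the whole star product reduces to $p\, w_0\, z_i^{\,p-1}$.

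Combining the two displays gives $p\, w_0\, z_i^{\,p-1} = 0$; since $\mychar F \neq p$ and $z_i$ is invertible, this forces $w_0 = 0$, i.e. $0 \notin l(z_i,z_j)$. As $l(z_i,z_j)$ is then a subset of $\{1,\dots,p-1\}$, its cardinality is at most $p-1$, giving $w(z_i,z_j) \le p-1$. The only real work is the commutation-relation shuffling in the middle step, and I expect no genuine obstacle here; the one point to watch is that the relation must be taken with $z_j$ appearing exactly once (the case $k=p-1$), which is exactly what makes a clean geometric sum in a single variable $l$ appear and isolate the commuting part.
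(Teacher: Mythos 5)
Your proof is correct and follows essentially the same route as the paper: both invoke the relation $z_i^{p-1} * z_j = 0$ coming from the diagonal exponentiation form of $F z_i + F z_j$ (Corollary \ref{pcent}) and observe that this star product collapses to $p\, z_{j,0}\, z_i^{p-1}$, forcing $z_{j,0}=0$. Your write-up merely makes explicit the geometric-sum computation that the paper leaves implicit.
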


\begin{proof}
The exponentiation form of $F z_i+F z_j$ is diagonal, therefore $z_i^{p-1} * z_j=0$. However, $z_i^{p-1} * z_j=p z_i^{p-1} z_{j,0}$, which means that $z_{j,0}=0$.
\end{proof}

\begin{rem} \label{notzero}
If $\xymatrix@C=20px@R=20px{x \ar@{->}[rr]^{\set{i,j}} &  & z}$ and $\xymatrix@C=20px@R=20px{z_i \ar@{<->}[r] & z_j}$ then $\xymatrix@C=20px@R=20px{x \ar@{<->}[r] & z_i z_j^{-1} \ar@{<->}[r] & z}$.
\end{rem}

\begin{cor} \label{notzerocor}
If $\xymatrix@C=20px@R=20px{x \ar@{->}[rr]^{\set{0,j}} &  & z}$ then $\xymatrix@C=20px@R=20px{z_0 \ar@{<->}[r] & z_j}$. Moreover, $x$ and $z$ are connected by two edges of weight 1.
\end{cor}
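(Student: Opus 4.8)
The plan is to reduce the whole statement to the single assertion $w(z_0,z_j)=1$, since the reverse weight and the ``moreover'' part then follow formally. First I would collect what is already available: because $l(x,z)=\{0,j\}$ is a two-element set we have $j\neq 0$, the component $z_0$ commutes with $x$, and $z_j x=\rho^j x z_j$. By Proposition \ref{split2cent} both $z_0$ and $z_j$ lie in $\CX$, so each is a Kummer element and in particular invertible. Set $K:=F[x]$. Since $x\notin F$ and $A$ is a division algebra of degree $p$, $K$ is a maximal subfield equal to the centralizer of $x$, so $z_0\in K$; moreover $K/F$ is cyclic of degree $p$ (it contains $\rho$ and is generated by a $p$-th root of $x^p$). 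Conjugation by $z_j$ sends $x\mapsto\rho^j x$, hence preserves $K$ and restricts to a generator $\theta$ of $\Gal(K/F)$.

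The key step is to show that conjugation by $z_0$ moves $z_j$ only inside the coset $Kz_j$. Using $z_j a=\theta(a)z_j$ for $a\in K$, a direct rewriting gives
$z_0 z_j z_0^{-1}=z_j\,g$, where $g=\theta^{-1}(z_0)z_0^{-1}\in K^\times$. Now decompose $z_j=\sum_i (z_j)_i$ into its $z_0$-eigencomponents, characterised by $(z_j)_i z_0=\rho^i z_0 (z_j)_i$, i.e. $z_0(z_j)_i z_0^{-1}=\rho^{-i}(z_j)_i$. Since $z_0\in K$ commutes with $x$, conjugation by $z_0$ preserves the eigenspace $V_j=\{w:wx=\rho^j xw\}=Kz_j$ of conjugation by $x$, so every $(z_j)_i$ lies in $V_j$, a free rank-one $K$-module. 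Comparing the $z_0$-eigencomponents on the two sides of $z_0 z_j z_0^{-1}=z_j g$, namely $\sum_i\rho^{-i}(z_j)_i=\sum_i (z_j)_i g$, and noting that right multiplication by $g-\rho^{-i}\in K$ keeps each summand in its own eigenspace (because $g$ commutes with $z_0$), I obtain $(z_j)_i(g-\rho^{-i})=0$ for every $i$. As $K$ is a field and $V_j$ is free of rank one over it, any nonzero $(z_j)_i$ forces $g=\rho^{-i}$; since $g$ is a single element and the $\rho^{-i}$ are distinct, at most one eigencomponent can survive. Thus $z_j$ has a single $z_0$-component, i.e. $w(z_0,z_j)=1$.

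To finish, the equivalence $w(z_0,z_j)=1\Leftrightarrow w(z_j,z_0)=1$ recorded after the definition of the graph upgrades this to a weight-$1$ edge $z_0\leftrightarrow z_j$, which is the first assertion. For the ``moreover'' part I would invoke Remark \ref{notzero} with $i=0$: from $l(x,z)=\{0,j\}$ together with $z_0\leftrightarrow z_j$ it produces $x\leftrightarrow z_0 z_j^{-1}\leftrightarrow z$, exhibiting $x$ and $z$ as connected by two edges of weight $1$.

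The hard part is the eigenspace bookkeeping in the middle paragraph: one must verify that the $z_0$-eigencomponents of $z_j$ remain inside $V_j=Kz_j$ and that right multiplication by the scalar correction $g$ respects the eigenspace grading, so that cancellation in the field $K$ leaves exactly one component. The identity $z_0 z_j z_0^{-1}=z_j g$ with $g\in K$ is precisely what converts the a priori noncommutative comparison into a statement over the commutative field $K$, where the rank-one freeness of $V_j$ delivers the conclusion.
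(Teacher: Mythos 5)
Your argument is correct, but it takes a genuinely different route from the paper's, which is a two-line affair: since $z_0$ commutes with $x$ and is Kummer (Proposition \ref{split2cent}), and $A$ is cyclic of degree $p$, the paper simply invokes the classification of Kummer elements in the maximal subfield $F[x]$ to write $z_0=x^k$ (up to a scalar), whence $z_jz_0=\rho^{jk}z_0z_j$ gives $w(z_0,z_j)=1$ at once; the ``moreover'' part is then Remark \ref{notzero} with $i=0$, exactly as you do. What you do instead is avoid that classification: you encode conjugation by $z_0$ on the rank-one $K$-module $V_j=Kz_j$ as right multiplication by $g=\theta^{-1}(z_0)z_0^{-1}\in K^\times$ and compare $z_0$-eigencomponents, concluding that at most one component of $z_j$ survives. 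This is an honest, self-contained re-derivation of the Kummer-theoretic fact the paper takes for granted (your conclusion $g=\rho^{-i}$ is precisely the statement $\theta(z_0)=\rho^{i}z_0$, i.e.\ $z_0\in Fx^{k}$), at the cost of more bookkeeping; the paper's version is shorter but leans on an unproved appeal to cyclicity. Both arguments share the same implicit hypothesis, namely that $F[x]$ is a field (e.g.\ $A$ a division algebra), which you at least state explicitly. Everything checks out: the eigenspace-preservation claims are justified because $g$ and $z_0$ lie in the commutative $K$, nonzero elements of $Kz_j$ are invertible, and the passage from $w(z_0,z_j)=1$ to the two-edge chain is the same as in the paper.
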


\begin{proof}
Since $A$ is cyclic of degree $p$, and $z_0 \in \CX$ (according to \ref{split2cent}), $z_0=x^k$ for some $k$. Therefore $w(z_0,z_j)=1$. As a Result of Remark \ref{notzero}, $x$ and $z$ are connected by two edges of weight 1.
\end{proof}

\begin{prop}\label{prop2in2}
If $\xymatrix@C=20px@R=20px{x \ar@{->}[rr]^{\set{i,j}} &  & z}$ and $\xymatrix@C=20px@R=20px{z_i \ar@{->}[rr]^{\set{m,n}} &  & z_j}$ then
$m \not \equiv -n \pmod{p}$.
\end{prop}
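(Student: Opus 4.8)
The plan is to argue by contradiction, assuming $m\equiv -n\pmod p$, and to push all of the data into the symbol algebra generated by $z_i$ together with the components of $z_j$. Set $a:=(z_j)_m$ and $b:=(z_j)_n$. By \Pref{split2cent} both are Kummer, and by construction each is a single $z_i$-eigenvector, so $a z_i=\rho^{m}z_i a$ and $b z_i=\rho^{-m}z_i b$. Since $m\not\equiv0$, the pair $\{z_i,a\}$ obeys a symbol relation and hence generates all of $A$, with $\{z_i^{s}a^{t}:0\le s,t\le p-1\}$ an $F$-basis and $\Ce{z_i}=F[z_i]$. Writing $A_{(c)}:=\{v\in A: vz_i=\rho^{c}z_i v\}$, the eigenspace $A_{(c)}$ is the free rank-one $F[z_i]$-module $F[z_i]\,a^{t}$ with $tm\equiv c$; in particular $b\in A_{(-m)}=F[z_i]\,a^{p-1}$, so $b=q(z_i)\,a^{p-1}$ for some $q\in F[z_i]$.

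Next I would import the first edge. Neither $i$ nor $j$ is $0$, since otherwise \Cref{notzerocor} would force $w(z_i,z_j)=1$. Hence conjugation by $x$ is an automorphism of order $p$ preserving each $A_{(c)}$, because $x$ normalizes $F[z_i]$, scaling $z_i$ by $\rho^{-i}$. Splitting the relation $x z_j x^{-1}=\rho^{-j}z_j$ along the distinct eigenspaces $A_{(m)}\ni a$ and $A_{(-m)}\ni b$ shows that $a$ and $b$ are themselves $x$-eigenvectors with the \emph{common} eigenvalue $\rho^{-j}$, that is $a x=\rho^{j}x a$ and $b x=\rho^{j}x b$. Thus $x$ is a simultaneous eigenvector for conjugation by $z_i$ and by $a$, so in the basis above $x$ is a scalar multiple of a single monomial $z_i^{s_0}a^{t_0}$ with $s_0\equiv j m^{-1}$ and $t_0\equiv -i m^{-1}$, both nonzero. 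Feeding this back into $b x=\rho^{j}x b$ pins $q$ down to a single monomial as well, so that $z_j=a+c\,z_i^{s_1}a^{p-1}$ is a sum of exactly two basis monomials, with $c\in F^{\times}$ and $s_1\equiv 2j i^{-1}\not\equiv0$.

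The last step would use that $z=z_i+z_j$ is Kummer. Since $z_i$ and $z_j$ are the components of $z$ with respect to $x$, carrying the distinct labels $i\ne j$, \Cref{pcent} supplies the diagonal relations $z_i^{\alpha}* z_j^{\,p-\alpha}=0$ for $1\le\alpha\le p-1$. I would expand each of these in the symbol basis with $z_j=a+c\,z_i^{s_1}a^{p-1}$ and sort the resulting words by their $z_i$-degree. Here the assumption $n\equiv-m$ is essential: it places the two monomials of $z_j$ in the opposite eigenspaces $A_{(\pm m)}$, so the $z_i$-degree of a word depends only on the excess of $a$'s over $b$'s, and isolating the degree-zero part of each relation couples the two monomials symmetrically. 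The goal is to show that, after the root-of-unity cancellations of the shape $\sum_{k}\rho^{mk}=0$ that annihilate most components, what survives is a nontrivial scalar identity relating $c$, $z_i^{p}$ and $a^{p}$ which cannot hold unless $c=0$; this contradicts $b\neq0$, and equivalently would place $0\in l(a,b)$ against \Cref{nozero}.

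The hard part is exactly this surviving degree-zero identity: one must verify that it is not itself killed by the cancellations that dispose of the other components, and this is where the arithmetic of $p$ enters. Small primes should be handled separately, as is done in \Tref{nothree}: for $p=3$ a weight-two label is necessarily $\{1,2\}$, for which $m+n\equiv0$ automatically, so the statement is to be read for $p\ge5$, which is the range relevant to this paper.
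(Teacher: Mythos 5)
Your setup is sound and matches the paper's: you correctly reduce to $i,j,m,n\neq 0$ via \Cref{notzerocor} and \Cref{nozero}, and you correctly identify $z_{j,m}$ and $z_{j,n}$ as scalar multiples of single monomials in the symbol generated by $z_i$ and $x$ (equivalently by $z_i$ and $a$). But the proof stops exactly where it needs to close: you announce that expanding the diagonal relations $z_i^{\alpha}*z_j^{p-\alpha}=0$ and isolating the degree-zero part ``should'' yield an identity forcing $c=0$, and you yourself flag the verification as ``the hard part.'' That is the entire content of the proposition, and the paper's own argument strongly suggests your intended endgame cannot work as stated. What the Kummer/diagonality condition actually extracts from the two monomials is not $c=0$ but a commutation constraint: by the result quoted from \cite{ChapVish1}, the components must satisfy $z_{j,m}z_{j,n}=\rho^{m}z_{j,n}z_{j,m}$, and computing the commutation factor of the two monomials turns this into the congruence $2j\equiv i^{2}\pmod p$. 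This is a condition on the labels, perfectly satisfiable for suitable $(i,j)$, with no restriction on the scalar $c$ at all. So no amount of sorting words by $z_i$-degree will kill $c$; the first-order information is exhausted by the congruence.

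The missing idea is the paper's second step: the configuration is invariant under replacing $x$ by $x^{2}$. The decomposition of $z$ with respect to $x^{2}$ has the \emph{same} components $z_i,z_j$ but labels $\set{2i,2j}$, and $l(z_i,z_j)$ is unchanged, so the identical argument yields $2(2j)\equiv(2i)^{2}\pmod p$. Together with $2j\equiv i^{2}$ this forces $2i^{2}\equiv 0$, hence $i\equiv 0$, contradicting \Cref{notzerocor}. Without this doubling trick (or an equivalent source of a second, incompatible constraint) the proposal does not reach a contradiction. A minor further point: your remark that the statement should only be read for $p\geq 5$ is a misreading --- the argument above works for $p=3$ as well, where the proposition has real content (it rules out $w(z_i,z_j)=2$ entirely, since \Cref{nozero} would force the label $\set{1,2}$ with $1\equiv -2$).
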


\begin{proof}
According to Corollary \ref{notzerocor}, $i \neq 0$ because $w(z_i,z_j)=2$. Therefore $z_{j,m} \in F z_i^{j i^{-1}} x^{m (-i)^{-1}}$ and $z_{j,n} \in F z_i^{j i^{-1}} x^{n (-i)^{-1}}$.
We have $m,n \neq 0$ due to Corollary \ref{nozero}.
Assume $m \equiv -n \pmod{p}$.

In Corollary \ref{pcent} we saw that $F z_i+F z_j$ is a Kummer space with a diagonal exponentiation form. Since $l(z_i,z_j)=\set{m,-m}$.
From \cite{ChapVish1} we obtain $z_{j,m} z_{j,n}=\rho^m z_{j,n} z_{j,m}$.
Consequently $\rho^{i^{-2} j (m-n)}=\rho^m$, which means that $2 j \equiv i^2 \pmod{p}$.
But now, $l(x^2,z)=\set{2 i,2 j}$ and therefore for similar arguments $2 (2 j) \equiv (2 i)^2 \pmod{p}$, and that creates a contradiction.
\end{proof}

\begin{thm}\label{twotwo}
If $\xymatrix@C=20px@R=20px{x \ar@{<->}[r]^{2}_{2}  & z}$ then $x$ and $z$ are connected by two edges of weight 1.
\end{thm}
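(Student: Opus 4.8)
The plan is to produce a single intermediate Kummer element and appeal to \Rref{notzero}. First I would invoke \Pref{weight22}: since $w(x,z)=w(z,x)=2$, there are $i\neq j$ with $x=x_i+x_j$ and $z=z_{-i}+z_{-j}$, where $z_{-i},z_{-j}$ are exactly the homogeneous components of $z$ with respect to $x$ (so that $l(x,z)=\set{-i,-j}$), and in addition $z_{-i} z_{-j}=\rho^{i-j} z_{-j} z_{-i}$.

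The key point is that the second conclusion of \Pref{weight22} is already the whole content of the theorem in disguise. The relation $z_{-i} z_{-j}=\rho^{i-j} z_{-j} z_{-i}$ rewrites as $z_{-j} z_{-i}=\rho^{j-i} z_{-i} z_{-j}$, which says that all of $z_{-j}$ sits in a single $\rho$-eigenspace for conjugation by $z_{-i}$. By the uniqueness of the decomposition this forces $l(z_{-i},z_{-j})=\set{j-i}$, hence $w(z_{-i},z_{-j})=1$; since $i\neq j$ we indeed get $j-i\neq 0$, in agreement with \Cref{nozero}. By the equivalence $w(z_{-i},z_{-j})=1 \iff w(z_{-j},z_{-i})=1$ recorded after the construction, the bidirectional edge is genuine. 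Finally, by \Pref{split2cent} both $z_{-i}$ and $z_{-j}$ are Kummer, so $z_{-i}z_{-j}^{-1}$ is defined and the notation $\xymatrix@C=20px@R=20px{z_{-i} \ar@{<->}[r] & z_{-j}}$ is justified.

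Now I would simply apply \Rref{notzero} to the label $l(x,z)=\set{-i,-j}$ and its components $z_{-i},z_{-j}$: this produces $\xymatrix@C=20px@R=20px{x \ar@{<->}[r] & z_{-i}z_{-j}^{-1} \ar@{<->}[r] & z}$, i.e. $x$ and $z$ are joined by two edges of weight $1$ through $z_{-i}z_{-j}^{-1}$, which is precisely the assertion.

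I do not expect a serious obstacle here: the arithmetic difficulty is carried entirely by \Pref{weight22}, and the only things to verify are the translation of its commutation identity into the equality $w(z_{-i},z_{-j})=1$ and that the hypotheses of \Rref{notzero} hold. The one case deserving a glance is $0\in\set{-i,-j}$, say $-i=0$; but then $z_{-i}=z_0$ commutes with $x$, the identity $z_0 z_{-j}=\rho^{-j} z_{-j} z_0$ still gives weight $1$, and either \Rref{notzero} or directly \Cref{notzerocor} finishes the argument in exactly the same way.
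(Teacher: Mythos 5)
Your proposal is correct, and it does prove the theorem, but it reaches the intermediate element by a genuinely different route. The paper's proof also starts from \Pref{weight22}, but it uses only the decompositions $x=x_i+x_j$, $z=z_{-i}+z_{-j}$: it sets $y=xz-\rho^i zx$, which equals $(\rho^j-\rho^i)\,z x_j$ and also $(\rho^j-\rho^i)\,z_{-j} x$, so that $w(z,y)=1$ and $w(x,y)=1$ are immediate from the two factorizations, and $y\in\CX$ follows from $x_j\in\CX$ via \Pref{split2cent}. You instead take $u=z_{-i}z_{-j}^{-1}$, which requires part (2) of \Pref{weight22} --- the relation $z_{-i}z_{-j}=\rho^{i-j}z_{-j}z_{-i}$, which you correctly translate into $l(z_{-i},z_{-j})=\set{j-i}$ with $j-i\not\equiv 0$ --- together with \Rref{notzero}. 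Both chains are valid, and all the hypotheses you need ($z_{-i},z_{-j}\in\CX$ from \Pref{split2cent}, non-commutation so that $u\notin F$) are indeed in place. The trade-off is that \Rref{notzero} is stated in the paper without proof, so your argument inherits that (routine) verification, whereas the paper's $y$ is checked directly and needs nothing beyond \Pref{split2cent}; on the other hand, your route makes the logical dependence on the commutation relation of \Pref{weight22} and on \Rref{notzero} explicit, which the paper's computation hides. Your separate glance at the case $0\in l(x,z)$ is harmless but not needed: \Cref{notzerocor} already disposes of it, and your main computation does not break there anyway.
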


\begin{proof}
We have $x=x_i+x_j$ and $z=z_{-i}+z_{-j}$ according to Proposition \ref{weight22}. Setting $y=x z-\rho^i z x$ we have $y=(\rho^j-\rho^i) z x_j$. Therefore $y^p=(\rho^j-\rho^i)^p z^p x_j^p$. According to Proposition \ref{split2cent} we know that $x_j \in \CX$, and therefore $y \in \CX$. Since $w(x,y)=w(z,y)=1$ we get the chain $\xymatrix@C=20px@R=20px{x \ar@{<->}[r]  & y \ar@{<->}[r]  & z}$.
\end{proof}

\section{In cyclic algebras of degree 5}

Fix $p=5$.
The main goal of this section is to prove the following:
\begin{thm}\label{twonozero}
If $w(x,z)=2$ and $0 \not \in l(z,x)$ then $x$ and $z$ are connected by a sequence of edges of weight 1.
\end{thm}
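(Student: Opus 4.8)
The plan is to run a case analysis on the backward weight $w(z,x)$, which the hypotheses confine to $\{2,3,4\}$. Since $w(x,z)=2$, the weight-$1$ duality forbids $w(z,x)=1$; and since $0\notin l(z,x)$ with $p=5$ we have $l(z,x)\subseteq\{1,2,3,4\}$, whence $w(z,x)\le 4$. The two lightest cases are already settled: if $w(z,x)=2$ then \Tref{twotwo} produces the desired two weight-$1$ edges, and if $w(z,x)=3$ then \Tref{nothree} does. A further painless reduction disposes of the forward central slot: if $0\in l(x,z)$ then \Cref{notzerocor} already connects $x$ and $z$ by two weight-$1$ edges, so from now on I assume $l(x,z)=\{m,n\}$ with $m,n\neq 0$ and concentrate on the remaining case $w(z,x)=4$, i.e. $l(z,x)=\{1,2,3,4\}$ and $x=x_1+x_2+x_3+x_4$.

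For this case I would exploit the forward split $z=z_m+z_n$. By \Pref{split2cent} both $z_m,z_n\in\CX$, and since each is homogeneous with respect to $x$ they are joined to $x$ by weight-$1$ edges, $x\leftrightarrow z_m$ and $x\leftrightarrow z_n$. Thus the whole problem collapses to connecting the single component $z_m$ to the sum $z$. Decomposing $z$ with respect to $z_m$ puts $z_m$ in the $0$-slot while $z_n$ contributes exactly the nonzero slots $l(z_m,z_n)$, which avoids $0$ by \Cref{nozero}; hence $w(z_m,z)=1+w(z_m,z_n)$ and $0\in l(z_m,z)$. If $w(z_m,z_n)=1$ we are immediately done: \Rref{notzero} (equivalently \Cref{notzerocor} applied to the pair $(z_m,z)$, whose label is then $\{0,a\}$) yields $x\leftrightarrow z_m z_n^{-1}\leftrightarrow z$.

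The heart of the theorem is therefore the sub-case $w(z_m,z_n)\ge 2$. My first aim would be to rule out $w(z_m,z_n)\in\{3,4\}$, using that by \Cref{pcent} the plane $Fz_m+Fz_n$ is a Kummer space with diagonal exponentiation form; expanding the vanishing mixed terms $z_m^{k} * z_n^{5-k}=0$ for $1\le k\le 4$ in the $z_m$-homogeneous components of $z_n$, together with the constraint of \Pref{prop2in2} that a weight-$2$ label $l(z_m,z_n)=\{c,c'\}$ must satisfy $c+c'\not\equiv 0\pmod 5$, should force $w(z_m,z_n)=2$. Once $w(z_m,z_n)=2$ we have $w(z_m,z)=3$ with $0\in l(z_m,z)$, and I would compute the reverse weight $w(z,z_m)$; the expected value is $2$, after which \Tref{nothree} applied to the pair $(z,z_m)$ supplies weight-$1$ edges between $z_m$ and $z$, which chain with $x\leftrightarrow z_m$ to finish.

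The main obstacle is precisely this last sub-case: controlling $w(z_m,z_n)$ and evaluating $w(z,z_m)$. Both reduce to the explicit expansion of the identity $[x,x,z]_{m,n}=0$, valid because $z$ occupies only the two $x$-homogeneous slots $m,n$, after substituting $x=x_1+x_2+x_3+x_4$ and sorting the resulting monomials $x_a x_b$ by their $z$-degree $a+b\pmod 5$. At $p=5$ each nonzero degree receives exactly one square $x_c^2$ together with one transposed pair $x_a x_b,\,x_b x_a$, so the squares cannot be isolated as cleanly as in \Tref{nothree} or \Pref{weight22}; disentangling these coupled relations, and matching them against the diagonal-form conditions, is where the genuine work lies, and if the heavier weights of $l(z_m,z_n)$ cannot be excluded outright one must instead iterate the same reverse-pair reduction until \Tref{nothree} or \Cref{notzerocor} applies.
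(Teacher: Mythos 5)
Your opening reductions coincide with the paper's: $w(z,x)\in\set{2,3,4}$, with $w(z,x)=2$ handled by \Tref{twotwo}, $w(z,x)=3$ by \Tref{nothree}, and $0\in l(x,z)$ by \Cref{notzerocor}. The gap is in the core case $w(z,x)=4$. Your plan rests on forcing $w(z_m,z_n)=2$ from the diagonal exponentiation form of $Fz_m+Fz_n$ (\Cref{pcent}) together with \Pref{prop2in2}, but this cannot succeed: the Remark in Section 5 exhibits $z=y+(x+(\rho^4-\rho)x^2+x^3+x^4)y^{-1}$ with $w(x,z)=2$ and $w(z_1,z_4)=4$. The tools you invoke depend only on $w(x,z)=2$, which that example satisfies, so they cannot exclude weight $4$. (Weight $3$ is genuinely excluded --- that is the unnamed theorem of Section 5 --- and the weight-$2$ sub-case is settled by the first proposition of Section 5 via an explicit three-edge chain using the commutation relations from the cited work on Kummer spaces, not by the reverse-weight computation you sketch, for which you give no justification.) Your fallback of ``iterating the reverse-pair reduction'' is not an argument: nothing guarantees the weights decrease or that the process terminates. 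Note also that beyond bounding $w(z,x)\le 4$ you never use the hypothesis $0\notin l(z,x)$, whereas the paper's proof of the hard case depends essentially on all four components $x_1,\dots,x_4$ being present.

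The paper's actual route for $w(z,x)=4$ is different in kind: it works with the backward decomposition $x=x_1+x_2+x_3+x_4$, reduces to the two cases $l(x,z)=\set{1,4}$ and $l(x,z)=\set{1,3}$ up to symmetry, and extracts an explicit intermediate Kummer element from graded pieces of multi-commutator identities --- $x_1z^3$ in the first case, after pinning down $x_2,x_3,x_4$ in terms of $x_1$ and $z$ from $[x,x,z]_{4,1}=0$ and $[x,x,z]_{1,4}=0$, and $x_4$ in the second case via $[z,z,z,x]_{1,2,3}=(\rho^4-\rho)(\rho^4-\rho^2)(\rho^4-\rho^3)z^3x_4$, which yields $l(x,x_4)=\set{0,2}$ and lets \Cref{notzerocor} finish. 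To repair your proposal you would need to supply an argument of comparable substance for the $w(z_m,z_n)=4$ sub-case, which your current outline leaves entirely open.
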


The proof of the theorem occupies the rest of this section.

\begin{prop}
If $\xymatrix@C=20px@R=20px{x \ar@{->}[rr]^{\set{i,j}} &  & z}$ and $\xymatrix@C=20px@R=20px{z_i \ar@{->}[r]^{2}  & z_j}$ then $x$ and $z$ are connected by three edges of weight 1.
\end{prop}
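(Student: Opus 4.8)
The plan is to exploit the fact that $z_i$ is already an $x$-homogeneous component of $z$, so that $\xymatrix@C=20px@R=20px{x \ar@{<->}[r] & z_i}$ is an edge of weight $1$; it then suffices to join $z_i$ (hence $x$) to $z$ by two further edges of weight $1$. First I would record, exactly as in the proof of \Pref{prop2in2}, that since $i\neq 0$ the two components of $z_j$ with respect to $z_i$ are the monomials $z_{j,m}\in F\,z_i^{s}x^{t_m}$ and $z_{j,n}\in F\,z_i^{s}x^{t_n}$, where $s=ji^{-1}$, $t_m=-mi^{-1}$, $t_n=-ni^{-1}$. A short monomial computation then gives their mutual commutation $z_{j,m}z_{j,n}=\rho^{e}z_{j,n}z_{j,m}$ with $e=s(m-n)=ji^{-1}(m-n)$.

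The key device is to re-decompose $z$ with respect to a \emph{monomial} rather than with respect to $x$. Put $a=z_i^{\,s-1}x^{t_m}$; this is a Kummer monomial in the pair $z_i,x$ (which generate $A$ because $i\neq 0$), and being a monomial it satisfies $\xymatrix@C=20px@R=20px{x \ar@{<->}[r] & a}$. A direct check shows that $z_i$ and $z_{j,m}$ carry the same label $-m$ with respect to $a$, while $z_{j,n}$ carries the distinct label $-e-n$ (distinct since $s\neq 1$). Hence $w(a,z)=2$ with the two components $Z_1=z_i+z_{j,m}$ and $Z_2=z_{j,n}$. By \Pref{split2cent} applied to the edge $\xymatrix@C=20px@R=20px{a \ar[r] & z}$, both $Z_1$ and $Z_2$ are Kummer, and $\xymatrix@C=20px@R=20px{a \ar@{<->}[r] & Z_1}$ and $\xymatrix@C=20px@R=20px{a \ar@{<->}[r] & Z_2}$ are edges of weight $1$.

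Next I would compute $l(Z_2,Z_1)$ by decomposing $Z_1=z_i+z_{j,m}$ with respect to $Z_2=z_{j,n}$: since $z_i$ sits in label $-n$ and $z_{j,m}$ in label $e$, one gets $l(Z_2,Z_1)=\set{-n,e}$, so that $w(Z_1,Z_2)=1$ precisely when $e=-n$. Here the hypothesis that $z$ is \emph{genuinely} a Kummer element is essential: the vanishing of the mixed coefficients of $z^5$ (equivalently, the diagonal exponentiation form supplied by \Cref{pcent}) forces $e\in\set{-n,m}$, the two values being distinct because $m\not\equiv -n$ by \Pref{prop2in2}. If $e=-n$ the choice of $a$ above already gives $w(Z_1,Z_2)=1$; if instead $e=m$, one simply merges $z_i$ with $z_{j,n}$ rather than with $z_{j,m}$ (replace $t_m$ by $t_n$ in $a$), and the symmetric computation again yields weight $1$.

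With $\xymatrix@C=20px@R=20px{Z_1 \ar@{<->}[r] & Z_2}$ of weight $1$ in hand, \Rref{notzero} applied to the edge $\xymatrix@C=20px@R=20px{a \ar[r] & z}$ (whose two components are $Z_1,Z_2$) produces $\xymatrix@C=20px@R=20px{a \ar@{<->}[r] & Z_1Z_2^{-1} \ar@{<->}[r] & z}$, two edges of weight $1$; concatenating with $\xymatrix@C=20px@R=20px{x \ar@{<->}[r] & a}$ gives the desired chain $\xymatrix@C=20px@R=20px{x \ar@{<->}[r] & a \ar@{<->}[r] & Z_1Z_2^{-1} \ar@{<->}[r] & z}$ of three edges of weight $1$. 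The main obstacle is the middle step: proving that $z^5\in F$ forces $e\in\set{-n,m}$. This is a Gaussian ($\rho$-multinomial) computation special to $p=5$ — one verifies that every mixed monomial coefficient of $(z_i+z_{j,m}+z_{j,n})^5$ vanishes if and only if $e\in\set{-n,m}$, so that only these configurations are realizable — and it is precisely here that the restriction to degree $5$ enters.
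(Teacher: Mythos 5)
Your argument is correct and is essentially the paper's own proof in different clothing: the dichotomy $e=m$ versus $e=-n$ is exactly the paper's two cases $z_{j,1}z_{j,3}=\rho z_{j,3}z_{j,1}$ versus $z_{j,1}z_{j,3}=\rho^2 z_{j,3}z_{j,1}$ (after the normalization $\set{m,n}=\set{1,3}$, with $n=4$ excluded by \Pref{prop2in2}), and your middle element $Z_1Z_2^{-1}$ coincides up to a scalar with the paper's $z_{j,n}^{-1}(z_i+z_{j,m})$; the only cosmetic difference is that you route the monomial end of the chain through $a=z_i^{s-1}x^{t_m}$ and \Rref{notzero} instead of through $z_{j,n}$ itself. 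The ``main obstacle'' you flag --- that $z^5\in F$ forces $e\in\set{-n,m}$ --- is precisely the fact the paper imports from \cite{ChapVish1}, so it need not be reproved by a fresh Gaussian computation.
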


\begin{proof}
Let $l(x,z)=\set{i,j}$ and $l(z_i,z_j)=\set{m,n}$.
Without loss of generality we can assume that $m=1$ and $n=3$ or $n=4$.
According to Proposition \ref{prop2in2}, the case of $n=4$ is not possible, and so we assume that $n=3$.
In \cite{ChapVish1} it is proven that in this case either $z_{j,1} z_{j,3}=\rho z_{j,3} z_{j,1}$ or $z_{j,1} z_{j,3}=\rho^2 z_{j,3} z_{j,1}$.

In the first case the chain is
$$\xymatrix@C=20px@R=20px{z \ar@{<->}[r]  & z_{j,1}^{-1} (z_i+z_{j,3}) \ar@{<->}[r]  & z_{j,1} \ar@{<->}[r]  & x}.$$ In the second case the chain is
$$\xymatrix@C=20px@R=20px{z \ar@{<->}[r]  & z_{j,3}^{-1} (z_i+z_{j,1}) \ar@{<->}[r]  & z_{j,3} \ar@{<->}[r]  & x}.$$
\end{proof}

\begin{thm}
If $\xymatrix@C=20px@R=20px{x \ar@{->}[rr]^{\set{i,j}} &  & z}$ then $w(z_i,z_j) \neq 3$.
\end{thm}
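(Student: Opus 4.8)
Assume toward a contradiction that $w(z_i,z_j)=3$, say $l(z_i,z_j)=\{a,b,c\}$. First I would clear away the degenerate cases: if $i=0$ or $j=0$ then Corollary~\ref{notzerocor} already forces $w(z_i,z_j)=1$, so I may take $i,j\neq 0$. Then $z_i$ and $x$ generate $A$, and exactly as in the proof of Proposition~\ref{prop2in2} each component is a single monomial $z_{j,t}=\lambda_t\,z_i^{\,s}x^{\,t\mu}$ with $\lambda_t\in F^{\times}$, $s=ji^{-1}$ and $\mu=-i^{-1}$; in particular $z_j=z_i^{\,s}Q(x)$ is homogeneous in $z_i$, where $Q$ is supported on the three exponents $\{t\mu:t\in\{a,b,c\}\}$, and the components satisfy $z_{j,t}z_{j,t'}=\rho^{(t-t')s}z_{j,t'}z_{j,t}$. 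The arithmetic fact about $p=5$ that drives the argument is that $\{1,4\}$ and $\{2,3\}$ partition $\{1,2,3,4\}\pmod 5$, so \emph{every} $3$-element subset contains a full complementary pair $\{t^{*},-t^{*}\}$.

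The next step is to extract one clean numerical relation. By Corollary~\ref{pcent} the plane $Fz_i+Fz_j$ carries a diagonal form, equivalently all mixed products $z_i^{p}*z_j^{\,5-p}$ vanish. I would look at $z_i^{3}*z_j^{2}=0$ and read off the coefficient of the unique monomial in it that is free of $x$. Since $z_i$ carries no power of $x$, a word contributes to that coefficient only when its two $z_j$-letters have complementary weights; hence only $z_{j,t^{*}},z_{j,-t^{*}}$ survive, and as both terms are proportional to $z_i^{\,2s}$ the unknown scalars $\lambda_{t^{*}},\lambda_{-t^{*}}$ cancel. Using $1+\rho+\rho^{2}+\rho^{3}+\rho^{4}=0$, what is left is the single congruence $2s\equiv 1$, i.e. $s\equiv 3$ and $j\equiv 3i\pmod 5$. (The same trick with $x^{2}$ gives nothing new, since $s=ji^{-1}$ is scale invariant; this is where $p=5$ genuinely intervenes.)

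The relation $j\equiv 3i$ is by itself consistent, so the contradiction must come from the remaining data, and here the standing hypothesis $0\notin l(z,x)$ is essential: it says $x$ has no component in the centraliser $\Ce{z}=F[z]$, equivalently $\tr(xz^{m})=0$ for $0\le m\le 4$. From the description $z=z_i+z_i^{\,3}Q(x)$, a word of $z^{m}$ with $p$ letters $z_i$ and $m-p$ letters $z_j$ contributes to $\tr(xz^{m})$ only when its $z_i$-degree $3m-2p$ and its $x$-degree (one more than the sum of the participating exponents of $Q$) both vanish mod $5$. Running over $m$, for three of the four admissible supports a \emph{single} component survives: a direct computation gives $\tr(xz^{3})=5\lambda_i\,\alpha\beta\,(1+\rho^{i}+\rho^{2i})$, whose cyclotomic factor is nonzero, so that $\tr(xz^{3})\neq 0$ as soon as $\lambda_i\neq 0$, i.e. whenever $i\in l(z_i,z_j)$. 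Thus for every support not omitting $i$ we already get $0\in l(z,x)$, contradicting the hypothesis.

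The remaining support --- the one omitting $i$, which after $s\equiv 3$ forces $Q$ to be supported on the exponents $\{1,2,3\}$ --- is where the real work lies, and I expect it to be the main obstacle. Here no one-component trace survives: $\tr(xz^{4})=0$ is the only nontrivial instance of the hypothesis, its surviving part combining the exponent-multisets $\{1,1,2\}$ and $\{3,3,3\}$, while the Kummer condition $\tr(z^{4})=0$ relates the multisets $\{1,1,3\}$ and $\{1,2,2\}$. My plan is to compute these two coefficients explicitly --- each is a short cyclotomic sum over the interleavings, of the shape $c_1\lambda^{2}\lambda'+c_2\lambda''^{3}$ --- and then, together with $j\equiv 3i$, to show that they cannot hold simultaneously with all three $\lambda_t$ nonzero, again yielding $0\in l(z,x)$. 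Carrying out this final elimination, i.e. verifying that the relevant cyclotomic coefficients (and the ground-field arithmetic of $\alpha,\beta$) do not conspire to permit a nonzero solution, is the delicate point on which the whole theorem turns.
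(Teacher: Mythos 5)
Your proposal follows the paper's proof only as far as the first congruence: both arguments reduce to $i,j\neq 0$, note that any $3$-element label inside $\{1,2,3,4\}$ (Corollary \ref{nozero} excludes $0$) contains exactly one complementary pair $\{m,-m\}$, and extract from the $x$-free part of $z_i^{3}*z_j^{2}=0$ the single relation $z_{j,m}z_{j,-m}=\rho^{m}z_{j,-m}z_{j,m}$, hence one congruence on $s=ji^{-1}$. There the paper finishes in one line: it records the congruence as $2j\equiv i^{2}\pmod 5$, replaces $x$ by $x^{2}$ (which sends $(i,j)$ to $(2i,2j)$ while leaving $z_i,z_j$ and their mutual label unchanged) to get $2(2j)\equiv(2i)^{2}$, and the pair of congruences forces $i\equiv 0$. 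You instead compute the congruence as $2s\equiv 1$, i.e.\ $2j\equiv i$, observe that this is invariant under $(i,j)\mapsto(2i,2j)$, and therefore go hunting for the contradiction elsewhere. Your version of the commutation exponent, $z_{j,m}z_{j,n}=\rho^{s(m-n)}z_{j,n}z_{j,m}$, checks out on concrete instances (for $i=2$, $j=1$ one finds $z_{j,m}z_{j,-m}=\rho^{m}z_{j,-m}z_{j,m}$ identically, even though $2j\not\equiv i^{2}$), so you have in fact put your finger on a real defect in the paper's own argument; but diagnosing that defect does not discharge your obligation to prove the statement.

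And your replacement route has two genuine gaps. First, you lean on ``the standing hypothesis $0\notin l(z,x)$'', but the theorem you are proving carries no such hypothesis: that condition belongs to Theorem \ref{twonozero}, the target of the section, not to this intermediate statement, which is asserted unconditionally. Even if every trace computation you sketch were carried out, you would only have proved the weaker statement with $0\notin l(z,x)$ adjoined to the hypotheses, and the case $0\in l(z,x)$ would remain untouched. Second, and decisively, you stop before the crux: for the surviving support $\{1,2,3\}$ you describe a plan --- compute the cyclotomic coefficients coming from $\tr(xz^{4})=0$ and $\tr(z^{4})=0$ and show they cannot vanish simultaneously with all three $\lambda_t\neq 0$ --- and then label that elimination ``the delicate point on which the whole theorem turns'' without performing it. Until that computation is done (and the extraneous hypothesis removed or justified), what you have is an outline identifying where the difficulty lives, not a proof that the difficulty can be overcome.
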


\begin{proof}
If $l(z_i,z_j)=\set{m,n,k}$ then without loss of generality $m \equiv -n \pmod{5}$ and $m,n \not \equiv -k \pmod{5}$.

The space $F z_i+F z_j$ is a Kummer with a diagonal exponentiation form according to Corollary \ref{pcent}.
From the relation $(z_i)^3 * (z_j)^2=0$ we get $z_{j,m} z_{j,n}=\rho^m z_{j,n} z_{j,m}$. But again, as in the proof of Proposition \ref{prop2in2}, it means that $2 j \equiv i^2 \pmod{p}$. As before, we shall have a contradiction, because we get $2 (2 j) \equiv (2 i)^2 \pmod{p}$ as well.
\end{proof}

There are however examples where $\xymatrix@C=20px@R=20px{x \ar@{->}[rr]^{\set{i,j}} &  & z}$ and $\xymatrix@C=20px@R=20px{z_i \ar@{->}[r]^{2}  & z_j}$ or $\xymatrix@C=20px@R=20px{z_i \ar@{->}[r]^{4}  & z_j}$:

\begin{rem}
If $A=F[x,y : x^5=\alpha, y^5=\beta, y x=\rho x y]$ and $z=y+(a_1 x+a_2 x^2+a_3 x^3+a_4 x^4) y^{-1}$ then $z$ is Kummer if and only if $a_2 a_3=(\rho^4-\rho) a_1 a_4$. Consequently, if we take $a_3=a_4=0$ then $w(x,z)=2$ and $w(z_1,z_{-1})=2$, and if we take $a_1=a_4=a_3=1$ and $a_2=(\rho^4-\rho)$ then $w(x,z)=2$ while $w(z_1,z_{-1})=4$.
\end{rem}

\begin{proof}
Consider the relations $z_1 * z_4^4=z_1^2 * z_4^3=z_1^3 * z_4^2=z_1^4 * z_4=0$ ($z_1=y$ and $z_4=a_0+a_1 x+a_2 x^2+a_3 x^3+a_4 x^4) y^{-1}$). On one hand, these relations are satisfied if and only if $z \in \CX$ (Corollary \ref{pcent}). On the other hand, it can be checked that these relations are satisfied if and only if $a_0=0$ and $a_2 a_3=(\rho^4-\rho) a_1 a_4$:

Due to the relation $y_1^4 * y_4=0$ we have $a_0=0$. Write $w_i=a_i x^i y_1^{-1}$.

Now, the relation $y_1^3 * y_4^2=0$ provides the following by conjugation by $y_1$:
\begin{enumerate}
\item $y_1^3 * w_3^2+y_1^3 * w_2 * w_4=0$
\item $y_1^3 * w_1^2+y_1^3 * w_3 * w_4=0$
\item $y_1^3 * w_4^2+y_1^3 * w_1 * w_2=0$
\item $y_1^3 * w_2^2+y_1^3 * w_1 * w_3=0$
\item $y_1^3 * w_1 * w_4+y_1^3 * w_2 * w_3=0$
\end{enumerate}

The first four relations are trivial.
From the fifth we obtain $5 (\rho+1+\rho^{-1}) a_1 a_4 \alpha y_1+5 (\rho^3+\rho^2+1) a_2 a_3 \alpha y_1=0$.
Consequently, $a_2 a_3=(\rho^4-\rho) a_1 a_4$.

The relation  $y_1^2 * y_4^3$ provides the following by conjugation by $y_1$:
\begin{enumerate}
\item $y_1^2 * w_1 * w_2^2+y_1^2 * w_1^2 * w_3+y_1^2 * w_2 * w_4^2+y_1^2 * w_3^2 * w_4=0$
\item $y_1^2 * w_1 * w_2 * w_3+y_1^2 * w_1^2 * w_4+y_1^2 * w_2^3+y_1^2 * w_3 * w_4^2=0$
\item $y_1^2 * w_1 * w_2 * w_4+y_1^2 * w_2^2 * w_3+y_1^2 * w_4^3+y_1^2 * w_1 w_3^2=0$
\item $y_1^2 * w_1^3+y_1^2 * w_1 * w_3 * w_4+ y_1^2 * w_2 * w_3^2+y_1^2 * w_2^2 * w_4=0$
\item $y_1^2 * w_3^3+y_1^2 * w_2 * w_3 * w_4+y_1^2 * w_1 * w_4^2+y_1^2 * w_1^2 * w_2=0$
\end{enumerate}

The first relation is trivial.
The second relation implies that $5 (\rho^3+\rho^2+1) a_1 a_2 a_3 \alpha x y^{-1}+5 (\rho+1+\rho^{-1}) a_1^2 a_4 \alpha x y_1^{-1}=0$. This is automatically satisfied given $a_2 a_3=(\rho^4-\rho) a_1 a_4$. The same happens with the succeeding relations.

The relation  $y_1 * y_4^4$ provides the following by conjugation by $y_1$:
\begin{enumerate}
\item $y_1 * w_1 * w_2 * w_3 * w_4+y_1 * w_1^2 * w_4^2+y_1 * w_2^2 * w_3^2+y_1 * w_2^3 * w_4+y_1 * w_3 * w_4^3+y_1 * w_1 * w_3^3+y_1 * w_1^3 * w_2=0$
\item $y_1 * w_1^3 * w_3+y_1 * w_1^2 * w_2^2+y_1 * w_1 * w_2 * w_4^2+y_1 * w_1 * w_3^2 * w_4+y_1 * w_2^2 * w_3 * w_4+y_1 * w_2 * w_3^3+y_1 * w_4^4=0$
\item $y_1 * w_2^3 * w_1+y_1 * w_2^2 * w_4^2+y_1 * w_2 * w_4 * w_3^2+y_1 * w_2 * w_1^2 * w_3+y_1 * w_4^2 * w_1 * w_3+y_1 * w_4 * w_1^3+y_1 * w_3^4=0$
\item $y_1 * w_3^3 * w_4+y_1 * w_3^2 * w_1^2+y_1 * w_3 * w_1 * w_2^2+y_1 * w_3 * w_4^2 * w_2+y_1 * w_1^2 * w_4 * w_2+y_1 * w_1 * w_4^3+y_1 * w_2^4=0$
\item $y_1 * w_4^3 * w_2+y_1 * w_4^2 * w_3^2+y_1 * w_4 * w_3 * w_1^2+y_1 * w_4 * w_2^2 * w_1+y_1 * w_3^2 * w_2 * w_1+y_1 * w_3 * w_2^3+y_1 * w_1^4=0$
\end{enumerate}
All these relations are trivial.
\end{proof}

\begin{proof}[Proof of Theorem \ref{twonozero}]
The case of $0 \in l(x,z)$ has already been dealt with (Corollary \ref{notzerocor}). The same goes for $w(z,x)=2$ (Theorem \ref{twotwo}).
The case of $w(x,z)=2$ and $w(z,x)=3$ was covered in Theorem \ref{nothree}. Assume $w(z,x) \neq 3$.
Let us assume that $0 \not \in l(x,z)$ and $w(z,x)=4$.
Consequently, $l(x,z) \subseteq l(z,x)$.

There are two distinct cases: $l(x,z)=\set{1,4}$ and $l(x,z)=\set{1,3}$.
Assume $l(x,z)=\set{1,4}$. By taking the part of equality $[x,x,z]_{4,1}=0$ which $\rho^3$-commutes with $z$ we obtain $(z x_2-\rho^4 x_2 z) x_1-\rho x_1 (z x_2-\rho^4 x_2 z)=0$. Consequently $(\rho^3-\rho^4) \rho^4 x_2 x_1 z-(\rho^3-\rho^4) \rho x_1 x_2 z=0$, which means that $x_1 x_2=\rho^3 x_2 x_1$.
Therefore $x_2=a x_1^2 z^3$ for some $a \in F$.

Now, by taking the part of the equality $[x,x,z]_{4,1}=0$ which $\rho^4$-commutes with $z$ we obtain $(z x_3-\rho^4 x_3 z) x_1-\rho x_1 (z x_3-\rho^4 x_3 z)+(z x_2-\rho^4 x_2 z) x_2-\rho x_2 (z x_2-\rho^4 x_2 z)=0$. Hence $x_3=(-\rho^3-1) a z^5 x_1^3 z+b x_1^3 z^3$.

By taking the part of the equality $[x,x,z]_{1,4}=0$ which $\rho^2$-commutes with $z$ we obtain $(z x_3-\rho x_3 z) x_4-\rho^4 x_4 (z x_3-\rho x_3 z)=0$. Consequently $(\rho^2-\rho) \rho x_3 x_4 z-(\rho^2-\rho) \rho^4 x_4 x_3 z=0$, which means that $x_3 x_4=\rho^3 x_4 x_3$. Therefore $x_4=c x_3^3 z$ for some $c \in F$.

By taking the part of the equality $[x,x,z]_{1,4}=0$ which $\rho$-commutes with $z$ we obtain $(z x_2-\rho x_2 z) x_4-\rho^4 x_4 (z x_2-\rho x_2 z)+(z x_3-\rho x_3 z) x_3-\rho^4 x_3 (z x_3-\rho x_3 z)=0$. Now, by taking the projection on the line $F x_1 z^4$, we get that $b=0$.

Therefore $x \in F[x_1 z^3] x_1$, which means that
$\xymatrix@C=20px@R=20px{x \ar@{<->}[r]  & x_1 z^3 \ar@{<->}[r]  & z}$.

Assume $l(x,z)=\set{1,3}$. Then we have the following equality $(\rho^4-\rho)(\rho^4-\rho^2)(\rho^4-\rho^3) z^3 x_4=[z,z,z,x]_{1,2,3}$.
Substituting $z=z_1+z_3$ in this equality we get that $l(x,x_4)=\set{0,2}$. Consequently $x$ and $x_4$ are connected by edges of weight 1, and because $w(x_4,z)=1$, so are $x$ and $z$.
\end{proof}

\section*{Acknowledgements}
I owe thanks to Jean-Pierre Tignol and Uzi Vishne for their help and support.

\section*{Bibliography}
\bibliographystyle{amsalpha}
\bibliography{bibfile}
\end{document}